\newtheorem{thm}{Theorem}[section]
\newtheorem{prop}[thm]{Proposition}
\newtheorem{lem}[thm]{Lemma}
\newcommand{\Groves}{MR1425318}
\newcommand{\FredenKS}{MR2777003}
\newcommand{\FredenK}{MR2328173}
\newcommand{\FredenA}{FredenA}
\newcommand{\DiekLaun}{MR2787455}
\newcommand{\LaunDiss}{LaunDiss}
\newcommand{\Brazil}{MR1169911}
\newcommand{\CollinsEG}{MR1249578}
\newcommand{\EdjvetJ}{MR1151287}
\newcommand{\LS}{MR0577064}
\newcommand{\ElderLinearBS}{MR2776987}
\newcommand{\ElderCFBS}{MR2142503}
\newcommand{\Tom}{TomWong}
\newcommand{\CGA}{CGA}
\newcommand{\KKM}{KKM}
\renewcommand\>{\rangle}
\newcommand{\bs}[2]{BS(#1,#2)}
\begin{document}

\title{Metric properties of Baumslag--Solitar groups}

\author{Jos\'e Burillo}

\address{Departament de Matem\`atica Aplicada IV, EETAC-UPC, C/Esteve Torrades 5,
08860 Castelldefels, Barcelona, Spain} \email{burillo@ma4.upc.edu}

\author{Murray Elder}
\address{School of Mathematical \& Physical Sciences, The~University~of~Newcastle, Callaghan, New South Wales, Australia}
\email{murray.elder@newcastle.edu.au}

\thanks{Research supported by MEC grant MTM2011--25955, and Australian Research Council (ARC) grant  FT110100178}

\begin{abstract}
We compute estimates for the word metric of  Baumslag--Solitar groups in terms of the Britton's lemma normal form. As a corollary, we find lower bounds for the growth rate for the groups $\bs pq$, with $1<p\le q$.
\end{abstract}

\maketitle

\section{Introduction}

In this article we investigate the word length of elements in the groups
 $\bs pq$,  presented by
$$
\<a,t\,|\,ta^pt^{-1}=a^q\>.
$$
We use the Britton normal form to obtain an estimate for the word length, and use it to  compute a lower bound for the growth rate.

Recall that a function $f:G\to \mathbb R$ is a  {\em metric estimate} for a group $G$ with finite symmetric generating set $S$ if there exist constants $C_1,D_1,C_2,D_2>0$ so that
for every element $g\in G$, we have
$$
C_1f(x)-D_1\leq ||x||_S\leq C_2f(x)+D_2,
$$
where $||\cdot ||_S$ is the word metric with respect to $S$.

 Collins, Edjvet and Gill  and independently Brazil \cite{\Brazil, \CollinsEG}  showed that the groups $\bs 1q$ have  rational growth series, with explicit closed form series given in   \cite{\CollinsEG}. Edjvet and Johnson also gave
  rational growth series for the groups $\bs qq$  \cite{\EdjvetJ} (see also \cite{\LaunDiss} Section 2.8).
 Finding an expression for the growth series or the growth rate of $\bs pq$ for $1<p< q$
has proven to be a stubbornly difficult problem.
Freden {\em et al.} have made some progress in a series of papers \cite{\FredenA, \FredenK, \FredenKS}.  Wong
has also made some progress  \cite{\Tom}, finding various estimates for the growth rate. In each case the authors have  focussed on the (difficult) problem of computing the growth of just  elements equal to a power of the generator $a$, the so-called {\em horocyclic} elements. In  \cite{\FredenKS} Freden and Knudson prove that  the growth series of the horocyclic subgroup is rational when $p\mid q$, and conjecture that when $p\nmid q$ it is not D-finite.

Computing geodesics in these groups is an equally non-trivial problem. The second author gave a linear time algorithm for the case $\bs 1q$ \cite{\ElderLinearBS}, and Diekert and Laun gave a quadratic time algorithm for
the case $\bs pq$ when $p\mid q$ \cite{\DiekLaun, \LaunDiss}.

There has also been interest in the {\em language} of geodesics for these groups.
Groves showed that no set of geodesics surjecting to $\bs 1q$  can be regular  \cite{\Groves}. The second author constructed  a context-free and 1-counter language of geodesics for $\bs 12$
\cite{\ElderCFBS}.  Freden and  Adams give a context-sensitive combing in the case of $\bs 27$ \cite{\FredenA}.

 The paper is organised as follows. In Section \ref{sec:solv} we compute the metric estimate for $\bs 1q$, which we extend to the general case in Section \ref{sec:nonsolv}. In Sections \ref{sec:bound}--\ref{sec:autom} we use the estimate to compute lower bounds for the growth rate. In Section \ref{sec:exactbounds} we compare the bounds obtained with some known exact values for the growth rate.

We wish to thank Eric Freden,  Antoine Gournay and  Alexey Talambutsa for helpful comments and improvements to the paper.

\section{The groups $\bs 1q$}\label{sec:solv}

We are assuming that $q>1$. The group $\bs 1q$ admits the presentation
$$
\<a,t\,|\,tat^{-1}=a^q\>,
$$
so observe that by rewriting
$$
ta=a^qt\qquad ta^{-1}=a^{-q}t\qquad at^{-1}=t^{-1}a^{q}\qquad a^{-1}t^{-1}=t^{-1}a^{-q}
$$
we see that every element admits an expression of the type
$$
t^{-m}a^{N}t^n
$$
with $m,n\geq 0$, and $N$ can only be multiple of $q$ if one of the $m,n$ are zero. Under these conditions, it is easy to see that this expression is unique.

From here we can find the expression of the estimate of the metric.

\begin{prop}\label{bound1q} There exist constants $C_1,C_2,D_1,D_2>0$ such that for every element $x=t^{-m}a^{N}t^n$ of $\bs 1q$,  $N\neq 0$, we have
$$
C_1(m+n+\log |N|)-D_1\leq ||x||\leq C_2(m+n+\log |N|)+D_2,
$$
where $||x||$ is the word metric with respect to the generators $a,t$.
\end{prop}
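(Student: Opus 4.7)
The plan is to establish the two inequalities independently. For the upper bound, I write $N$ in base $q$ as $N = \varepsilon \sum_{i=0}^{k} d_i q^i$ with $\varepsilon \in \{\pm 1\}$, $d_i \in \{0, \ldots, q-1\}$, and $k = \lfloor \log_q |N| \rfloor$, and then use Horner's scheme together with the relation $a^{qd+r} = t\, a^d\, t^{-1}\, a^r$ to produce a word of length $O(\log|N|)$ spelling $a^N$. Prepending $t^{-m}$ and appending $t^n$ gives an expression for $x$ of length at most $m + n + C_2 \log|N| + D_2$, establishing the upper estimate.

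For the lower bound I would exploit the faithful embedding $\bs 1q \hookrightarrow \zz[1/q] \rtimes \zz$ sending $a \mapsto (1,0)$ and $t \mapsto (0,1)$, where $t$ acts on $\zz[1/q]$ by multiplication by $q$; under this embedding $x$ corresponds to $(N/q^m,\, n-m)$. Fix any word $w = y_1 \cdots y_L$ of length $L = ||x||$ representing $x$, write $L_a, L_{t+}, L_{t-}$ for the counts of $a^{\pm 1}$, $t$, and $t^{-1}$ letters, and let $e_i$ denote the $t$-exponent of the prefix $y_1 \cdots y_i$, with $s_+ = \max_i e_i$ and $s_- = -\min_i e_i$. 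Evaluating $w$ in the semidirect product yields the identity
$$
\frac{N}{q^m} \;=\; \sum_{j=1}^{L_a} \varepsilon_j \, q^{e_{i_j-1}},
$$
where $i_j$ are the positions of $a$-letters and $\varepsilon_j = \pm 1$.

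The critical step is to show $s_- \geq m$ whenever $m, n > 0$. Uniqueness of the Britton normal form forces $q \nmid N$, so there is a prime $p$ dividing $q$ with $\nu_p(N) < \nu_p(q)$. Taking $\nu_p$ of both sides of the displayed identity and using $\nu_p\bigl(\sum_j \varepsilon_j q^{e_{i_j-1}}\bigr) \geq \nu_p(q)\cdot \min_j e_{i_j-1} \geq -\nu_p(q)\cdot s_-$ yields $s_- \geq m$ after a short arithmetic manipulation. From this, the fact that a prefix reaches depth $-m$ forces $L_{t-} \geq m$, and then $L_{t+} = L_{t-} + (n-m) \geq n$, so $L_t \geq m + n$. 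Simultaneously, the triangle inequality applied to the displayed identity gives $|N| \leq L_a\, q^{m + s_+}$, i.e., $\log_q |N| \leq m + L_{t+} + \log_q L_a$. Combining these with $L = L_a + L_t$ and absorbing the bounded quantity $\log_q L_a - L_a$ into a constant delivers $L \geq \tfrac{1}{2}(m + n + \log_q|N|) - O(1)$, as required.

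The boundary cases $m = 0$ or $n = 0$ (where the valuation step is vacuous) are handled directly: when $m = 0$ one has $L_{t+} \geq n$ from $e_L = n$ alone, and when $n = 0$ the symmetric argument applies to $x^{-1}$. The main obstacle is the $p$-adic step when $q$ is composite: the argument must be written to invoke any prime divisor of $q$ (rather than $q$ itself) and to verify that the resulting bound rounds up to the integer inequality $s_- \geq m$, not merely $s_- \geq m - 1$.
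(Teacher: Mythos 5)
Your proof is correct, and while your upper bound (base-$q$ expansion of $N$ via Horner's scheme) is essentially the same as the paper's, your lower bound takes a genuinely different route. The paper argues combinatorially along a geodesic $b_1\cdots b_k$: it tracks the Britton normal form $t^{-m_i}a^{N_i}t^{n_i}$ of each prefix, observes that each generator changes $m_i+n_i$ by at most $1$ and can decrease $|N_i|$ only when the generator is $a^{\pm1}$, and then only by $q^{n_i}\le q^{2k}$, giving $|N|\le kq^{2k}$ and hence $m+n+\log|N|\le(2\log q+2)k$. You instead evaluate the word in the affine representation $\zz[1/q]\rtimes\zz$, obtaining the identity $N/q^m=\sum_j\varepsilon_j q^{e_{i_j-1}}$, and use a $p$-adic valuation argument (valid because uniqueness of the normal form forces $q\nmid N$ when $m,n>0$) to show the word must descend to $t$-exponent $-m$, which yields $L_{t^-}\ge m$ and $L_{t^+}\ge n$; the archimedean estimate $|N|\le L_a q^{m+s_+}$ then controls $\log|N|$. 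Your valuation step is carried out correctly, including the rounding from $s_-> m-1$ to $s_-\ge m$ and the reduction of the boundary cases $m=0$ or $n=0$ to the trivial exponent-sum bound (for $n=0$ via $x^{-1}$). What each approach buys: yours gives a cleaner constant ($C_1=\tfrac12$ in base-$q$ logarithms, versus the paper's $\tfrac{1}{2(\log q+1)}$) and isolates exactly where the hypothesis $q\nmid N$ in the normal form is used; the paper's prefix-tracking argument avoids any arithmetic of valuations and, more importantly, is the template that generalizes directly to the non-residually-solvable groups $\bs pq$ with $1<p<q$ in Theorem~\ref{boundpq}, where no such faithful affine representation over $\zz[1/q]$ is available.
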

Observe that the base of the logarithm is irrelevant, since a change of base would only imply an adjustment of the constants, so we will work with the base that is most convenient in each case.  Also observe that when $N=0$ the normal form word $t^k$ for $k\in\mathbb Z$ is a geodesic.

\begin{proof} To prove the upper bound, 
assume (taking the inverse if not) that $N>0$, and write $N$ in base $q$ as
$$
N=\sum_{i=0}^rk_iq^i
$$
with $0\leq k_i<q$ and $k_r\ne 0$. Observe that $r=\lfloor\log_q N\rfloor$, and that
\begin{equation}\label{normal1q}
t^{-m}a^{N}t^n=t^{-m}\left(a^{k_0}ta^{k_1}ta^{k_2}\ldots ta^{k_r}t^{-r-1}\right)t^n
\end{equation}
which has length at most $m+n+2q(r+1)$, which gives the desired inequality (with $C_2=D_2=2q$).

For the lower bound, let $b_1\dots b_k$ be a geodesic for $x$. Define a  sequence of elements $x_{k+1}, x_k,\dots, x_1$ by $x_{k+1}=x$ and
$x_i=xb_k^{-1}\dots b_i^{-1}$ for $1\leq i\leq k$.
Let $m_i,n_i\in \mathbb N, N_i\in\mathbb Z$  such that $x_i$ has normal form
 $t^{-m_i}a^{N_i}t^{n_i}$.
 Since $x_1$ is the identity, the integer sequence $\{m_i+n_i\}$ must go from $m+n$ to 0 in $k$ steps. Since multiplying $t^{-m}a^Nt^n$ by a generator reduces the sum $m+n$ by at most 1,
 we must have that $m+n\leq k$.
 The sequence $ |N_i|$ also goes to 0, with multiplication of $x_i$ by a generator changing $|N_i|$ as follows.
\begin{itemize}
\item For $x_it$, we have that $|N_{i}|=|N_{i-1}|$.
\item For $x_{i}t^{-1}$, we  have that $|N_{i}|=|N_{i-1}|$, except in the  case $n_{i}=0$. In this case $x_{i}=t^{-m_{i}}a^{N_{i}}$ and $x_{i}t^{-1}=t^{-m_{i}-1}a^{qN_{i}}$, and $$\log |N_{i-1}|=\log |qN_{i}|=\log|N_{i}|+\log q>\log |N_{i}|.$$
\item For $x_{i+1}a^{\pm 1}$ we have
 $$x_{i+1}a^{\pm 1}=t^{-m_{i+1}}a^{N_{i+1}}t^{n_{i+1}}a^{\pm 1}=t^{-m_{i+1}}a^{N_{i+1}}a^{\pm q^{n_{i+1}}}t^{n_{i+1}}$$
 so
 $N_{i+1}$ can be reduced by subtracting at most $q^{n_{i+1}}$.
\end{itemize}
It follows that only multiplication by $a^{\pm 1}$ can reduce $|N_i|$.
In the worst case (to maximize $|N|$)
each generator reduces $|N|$ by $q^{n_i}$ where  $n_i\leq n+k\leq 2k$ so $|N|\leq kq^{2k}$ so $$\log |N|\leq \log k+2k\log q\leq k+2k\log q=(2\log q+1)k.$$ 

Putting this together we have $$m+n+\log |N|\leq k+(2\log q+1)k=(2\log q+2)k$$ so $C_1=\frac1{2(\log q+1)}$ and $D_1=0$.

\end{proof}

We remark that the result also follows indirectly from recent work of Kharlampovich, Khoussainov and Miasnikov \cite{\KKM} who show that $BS(1,q)$ is {\em graph automatic}, which implies the normal form associated to the graph automatic structure gives a metric estimate for the group (see Lemma~2.15 in \cite{\CGA}).

\section{The groups $\bs pq$}\label{sec:nonsolv}

In this section we prove metric estimates for the groups $\bs pq$ for $1\leq p\leq q$. We make use of the following normal form for group elements,  based on Britton's lemma (see \cite{\LS}).

\begin{lem} Any element $x$ of $\bs pq$ for $1\leq p\leq q$ can be written  uniquely as $x=w(a,t)a^N$ where
$$
w(a,t)\in\{t,at,a^2t,\ldots,a^{q-1}t,t^{-1},at^{-1},a^2t^{-1},\ldots,a^{p-1}t^{-1}\}^*
$$ and is freely reduced.
\end{lem}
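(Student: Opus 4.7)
The result is a specific packaging of Britton normal form for $\bs pq$ viewed as an HNN extension of $\<a\>\cong\zz$, with associated subgroups $H_1=\<a^p\>$ and $H_2=\<a^q\>$ and stable letter $t$ satisfying $ta^pt^{-1}=a^q$. The proposed normal form places the coset representative of $\zz/H_2=\{0,1,\ldots,q-1\}$ immediately to the left of every $t$, and the coset representative of $\zz/H_1=\{0,1,\ldots,p-1\}$ immediately to the left of every $t^{-1}$, absorbing any remaining power of $a$ into the terminal $a^N$.

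\textbf{Existence.} From the defining relation one derives the two moves $a^qt=ta^p$ and $a^pt^{-1}=t^{-1}a^q$. Given an arbitrary freely reduced word in $a^{\pm 1},t^{\pm 1}$, I parse it as $a^{j_0}t^{\epsilon_1}a^{j_1}\cdots t^{\epsilon_n}a^{j_n}$ and process it left to right. For the exponent $j_{i-1}$ standing immediately to the left of $t^{\epsilon_i}$, write $j_{i-1}=m\alpha+k$ with $0\le k<\alpha$, where $\alpha=q$ if $\epsilon_i=+1$ and $\alpha=p$ if $\epsilon_i=-1$. The identity $a^{j_{i-1}}t^{\epsilon_i}=a^{k}\,t^{\epsilon_i}\,a^{m\beta}$ (with $\beta$ the other of $p,q$) then normalizes the block and passes the excess $a^{m\beta}$ into the next exponent $j_i$. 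Any $tt^{-1}$ or $t^{-1}t$ created in the process is reduced away. Once the $i$th syllable has been normalized, later steps act strictly to its right and leave it unchanged, which gives termination and produces the required shape, with whatever $a$'s remain at the end collected into $a^N$.

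\textbf{Uniqueness.} For uniqueness I would appeal to Britton's lemma \cite{\LS}. A word in the proposed form is Britton-reduced: a pinch $ta^{k}t^{-1}$ would be hosted by an interior syllable $a^{k}t^{-1}$ preceded by $t$, but the range $0\le k<p$ together with free reducedness forces $1\le k<p$, so $a^{k}\notin H_1=\<a^p\>$ and no such pinch is available; a symmetric argument rules out pinches $t^{-1}a^{k}t$. Since $\{0,\ldots,q-1\}$ and $\{0,\ldots,p-1\}$ are complete transversals for $H_2$ and $H_1$ in $\zz$, the canonical-form part of Britton's theorem then asserts that two such reduced words representing the same element must agree syllable by syllable; equality of the trailing exponent $N$ follows because $\zz$ embeds into $\bs pq$. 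The point that demands care, and is the main obstacle, is aligning the ``coset representative to the \emph{left} of $t^{\pm 1}$'' convention used in the statement with the standard formulation of Britton's theorem (often stated with coset representatives placed to the right of the stable letter), and verifying that free reducedness together with the stated range is precisely the condition that rules out pinches; once this is checked, the standard HNN machinery delivers the result.
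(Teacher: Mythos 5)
Your proof is correct and follows essentially the same route as the paper: existence via the left-to-right rewritings $a^{mq+k}t=a^{k}ta^{mp}$ and $a^{mp+k}t^{-1}=a^{k}t^{-1}a^{mq}$ together with free reduction, and uniqueness from Britton's lemma applied to the HNN structure with associated subgroups $\<a^p\>$ and $\<a^q\>$. The paper dismisses uniqueness as ``an easy exercise based on Britton's lemma,'' so your explicit check that the residue ranges plus free reducedness rule out all pinches is, if anything, more detailed than the original.
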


The proof is straightforward by performing the following rewritings, so that the only large power of $a$ appearing is on the right hand side:
\begin{itemize}\item
remove canceling pairs $aa^{-1},a^{-1}a,tt^{-1},t^{-1}t$;
\item replace $a^{r\pm dq}t$ by $a^rta^{\pm dp}$, where $0\leq r<q$;
\item replace $a^{s\pm dp}t^{-1}$ by $a^st^{-1}a^{\pm dq}$, where $0\leq s<p$.
\end{itemize}
 Uniqueness is an easy exercise based on Britton's lemma. The word $w(a,t)$ is of the form
$$
t^{m_0}a^{r_1}t^{m_1}a^{r_2}t^{m_2}\ldots a^{r_k}t^{m_k}
$$
where:
\begin{itemize}
\item $m_i,r_i\in\mathbb Z$,
\item $m_k=0$ only if the word $w$ is empty,
\item $m_i\neq 0$ for $i\geq 1$,
\item $0<r_i<q$,
\item if $p\le r_i<q$, then $m_i>0$.
\end{itemize}

We  now use this normal form to obtain metric estimates. We study first the case $p<q$.

\begin{thm} \label{boundpq}There exist constants $C_1, C_2,D_1,D_2>0$ such that for every element $x\in\bs pq$  for $1\leq p < q$  written as $w(a,t)a^N$, we have
$$
C_1(|w|+\log (|N|+1))-D_1\leq ||x||\leq C_2(|w|+\log (|N|+1))+D_2.
$$
\end{thm}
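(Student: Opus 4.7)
The plan is to mimic the two-sided argument of Proposition~\ref{bound1q}, now using the relator $ta^pt^{-1}=a^q$ and the refined normal form $x=w(a,t)a^N$.

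For the \textbf{upper bound}, I produce a word of the required length by concatenating $w$ itself (already of length $|w|$ in the generators $a,t$) with a short expression for $a^N$. To handle $a^N$ compactly, assume $N>0$ (inverting otherwise) and iterate the identity $a^{qd}=t\,a^{pd}\,t^{-1}$: writing $N=qd+r$ with $0\le r<q$ gives $a^N=a^r\,t\,a^{pd}\,t^{-1}$, replacing the exponent $N$ by $pd\le (p/q)N+p$. Recursing until the exponent is bounded takes $O(\log_{q/p}(|N|+1))$ iterations, each adding at most $q+2$ generators, yielding a word of length $O(\log(|N|+1))$ for $a^N$. Prepending $w$ gives the desired inequality.

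For the \textbf{lower bound}, I follow the template of Proposition~\ref{bound1q}. Given a geodesic $b_1\cdots b_k$ for $x$, set $x_i=xb_k^{-1}\cdots b_i^{-1}$, with normal form $w_i a^{N_i}$, so $x_{k+1}=x$ and $x_1$ is the identity. Define $f(y):=|w(y)|+\log(|N(y)|+1)$. The key claim is a uniform per-step bound: there exists $K=K(p,q)$ such that $|f(x_{i+1})-f(x_i)|\le K$ whenever $x_i$ and $x_{i+1}$ differ by a single generator. Telescoping then gives $|w|+\log(|N|+1)=f(x)\le Kk$, which is the sought lower bound.

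To establish the per-step bound I examine each generator. Multiplication by $a^{\pm 1}$ fixes $w$ and shifts $N$ by $\pm 1$. For $t$, the identity $a^Nt=a^r\,t\,a^{pd}$ (with $N=qd+r$, $0\le r<q$) shows that $w$ gains or loses at most one syllable of $\{a,t\}$-length at most $q+1$, while $|pd|\le (p/q)|N|+p$, so $\log(|N|+1)$ changes by $O(1)$. Multiplication by $t^{-1}$ is symmetric via $a^Nt^{-1}=a^s\,t^{-1}\,a^{qd}$ with $N=pd+s$, $0\le s<p$; here $|N|$ can grow by at most a factor $q/p$, so $\log(|N|+1)$ increases by at most $\log(q/p)+O(1)$. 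The main obstacle will be the bookkeeping in the subcases where appending $t^{\pm 1}$ cancels the trailing syllable of $w$ and residual $a$-powers must be folded back into the $a^N$ part of the normal form (for instance, when $w$ ends in $a^{r'}t$ and $s=0$, producing a new exponent $r'+qd$ rather than $qd$). In each such subcase both $|w|$ and $\log(|N|+1)$ still change by quantities depending only on $p,q$, yielding the uniform bound $K$ and completing the proof.
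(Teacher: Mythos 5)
Your proposal is correct and follows essentially the same route as the paper: the upper bound via repeated division $N=qd+r$, $a^N=a^rta^{pd}t^{-1}$ with the exponent shrinking by a factor $p/q$ each time, and the lower bound by tracking the normal forms $w_ia^{N_i}$ along a geodesic and bounding the per-step change in $|w_i|$ and $\log(|N_i|+1)$, with exactly the same case analysis (including the cancellation subcases where a trailing syllable of $w$ is absorbed). The only cosmetic difference is that you package the lower bound as a two-sided Lipschitz estimate for $f=|w|+\log(|N|+1)$ and telescope, whereas the paper bounds the growth of $|N_i|$ by summing the geometric series $\sum(q/p)^iq$; both yield the same constants up to renaming.
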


\begin{proof}
We first prove the upper bound. If $N=0$ we are done.
Assume (taking the inverse of $a^N$ if not) that $N>0$ and
 write $N=d_1q+r_1$ with $0\leq r_1<q$. Then $a^N=a^{r_1}ta^{d_1p}t^{-1}$.
 Note that  $$d_1p=d_1q\left(\frac{p}{q}\right)\leq \left(d_1q+r_1\right)\frac{p}{q} = N\left(\frac{p}{q}\right).$$
 Now write $d_1p=d_2q+r_2$ with $0\leq r_2<q$, where
 $$d_2p=d_2q\frac{p}{q}\leq \left(d_2q+r_2\right)\frac{p}{q} =d_1p\frac{p}{q}\leq N\left(\frac{p}{q}\right)^2.$$
Repeat to obtain $$a^N=a^{r_1}ta^{r_2}t\dots a^{r_k}ta^{d_kp}t^{-k}$$ with $1\leq d_kp<q$ when the process terminates, and observe that $d_kp$ cannot be zero, or else the process terminates in the previous step.  We have  $1\leq d_kp\leq N\left(\frac{p}{q}\right)^k$, so we deduce that $k\leq \log_{q/p} N$.  Our word has length
at most $qk+q+k$ since each $r_i<q$ and $d_1p<q$.
It follows that the length obtained for the word $a^N$ is at most
$$(q+1)\log_{q/p} N + q$$ which yields our upper bound.

Next,  the lower bound.
Let $x_1x_2\dots x_{n}$ be a geodesic for $x$  with $n=||x||$, and let $w_ia^{N_i}$ be the normal form for the prefix of length $i$. We have $w_0=\epsilon$ and $N_0=0$.

If $x_{i+1}=a^{\pm 1}$ then $w_{i+1}=w_i$, and $|N_{i+1}|\leq |N_i|+1$.
If $x_{i+1}=t^{-1}$, put $N_i=dp+r$ with $0\leq r<p$ and $d$ an integer.
\begin{itemize}\item If $r=0$ and $w_i$ ends with $t$, write $w=ua^ct$ with $0\leq c<q$ and $u$ empty or ending in $t^{\pm 1}$.
Then $w_ia^{N_i}t^{-1}=ua^cta^{dp}t^{-1}=ua^{c+dq}$. It follows that $|w_{i+1}|=|u|<|w_i|$ and
$$|N_{i+1}|\leq |N_i|\left(\frac{q}{p}\right)+q.$$

\item Otherwise $w_ia^rt^{-1}$ is freely reduced. In this case $w_ia^{N_i}t^{-1}=w_ia^rt^{-1}a^{dq}$ so $|w_{i+1}|\leq |w_i|+p$ and $$|N_{i+1}|\leq |N_i|\left(\frac{q}{p}\right).$$
\end{itemize}

If $x_{i+1}=t$, put $N_i=dq+s$ with $0\leq s<q$ and $d$ an integer.
\begin{itemize}\item If $s=0$ and $w_i$ ends with $t^{-1}$, write $w=ua^ct^{-1}$ with $0\leq c<p$ and $u$ empty or ending in $t^{\pm 1}$.
Then $w_ia^{N_i}t=ua^ct^{-1}a^{dq}t=ua^{c+dp}$. It follows that $|w_{i+1}|=|u|<|w_i|$ and
$$|N_{i+1}|\leq |N_i|+p <  |N_i|\left(\frac{q}{p}\right)+q.$$

\item Otherwise $w_ia^st$ is freely reduced. In this case $w_ia^{N_i}t=w_ia^sta^{dp}$ so $|w_{i+1}|\leq |w_i|+q$ and $$|N_{i+1}|\leq |N_i|\left(\frac{p}{q}\right).$$
\end{itemize}

It follows that for  multiplication by any generator we have
$$|w_{i+1}|\leq |w_i|+q \hspace{1cm} \mathrm{and} \hspace{1cm} |N_{i+1}|\leq  |N_i|\left(\frac{q}{p}\right)+q.$$

After $n$ multiplications the value of $|w_{n}|$ can be at most $qn$, while $|N_{n}|$ is bounded as follows.
We have
$ N_0  =  0$,
 $ |N_1|  \leq  q$,
 $ |N_2| \leq  \left(\frac{q}{p}\right)q+q$,
 $ |N_3|  \leq   \left(\frac{q}{p}\right)^2q+ \left(\frac{q}{p}\right)q+q$ and so on,
so $$|N_{n}|\leq \sum_{i=0}^{n-1} \left(\frac{q}{p}\right)^iq=q\frac{\left(\frac{q}{p}\right)^{n}-1}{\left(\frac{q}{p}\right)-1}< C\left(\frac{q}{p}\right)^{n}$$   where $C=\frac{q}{\left(\frac{q}{p}\right)-1} >1$ as $qp>q-p$.
We then have
$$|N_{n}|+1\leq  C\left(\frac{q}{p}\right)^{n}+1\leq 2C\left(\frac{q}{p}\right)^{n}$$ since $C>1$ and $\frac{q}{p}>1$.
Then $\log_{q/p} \left(|N_{n}|+1\right) \leq \log_{q/p} (2C)+ n=\log_{q/p} (2C)+ ||x||$.

The two  lower bounds combine to give the result with $D_1=\log_{q/p} (2C)$ and $C_1=\frac{1}{q+1}$.
\end{proof}

The case $p=q$ is considerably easier.

\begin{lem} \label{boundpp}There exists a constant $C_1>0$ such that for every element $x\in\bs pp$  for $p\geq 1$  written as $w(a,t)a^N$, we have
$$
C_1(|w|+|N|)\leq ||x||\leq |w|+|N|.
$$
\end{lem}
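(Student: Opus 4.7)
The upper bound is immediate: the word $w(a,t)a^N$ has length exactly $|w|+|N|$ as a word in $\{a^{\pm 1},t^{\pm 1}\}$ and represents $x$, so $||x||\leq|w|+|N|$.

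For the lower bound my plan is to follow the template of Theorem~\ref{boundpq}. Fix a geodesic $x_1x_2\ldots x_n$ for $x$ with $n=||x||$, and let $w_ia^{N_i}$ denote the normal form of the length-$i$ prefix, with $w_0=\epsilon$ and $N_0=0$. It suffices to show that a single generator multiplication increases the quantity $|w_i|+|N_i|$ by at most a constant $K=K(p)$; telescoping then gives $|w|+|N|\leq K\cdot||x||$, so one may take $C_1=1/K$.

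The crucial feature of the case $p=q$ is that $t$ commutes with $a^p$, so there is no geometric amplification of $|N|$ when $a$-powers are pushed across a $t^{\pm 1}$. Concretely, for $x_{i+1}=t$ one writes $N_i=dp+s$ with $0\leq s<p$ and uses $a^{N_i}t=a^sta^{dp}$, noting that $|dp|\leq|N_i|+(p-1)$ (the additive slack occurring only when $N_i<0$). The case split is the same as in Theorem~\ref{boundpq}: either a terminal syllable $a^ct^{-1}$ of $w_i$ cancels with the new $t$, in which case $|w_{i+1}|\leq|w_i|-1$ and $|N_{i+1}|\leq|N_i|+(p-1)$; or $w_ia^st$ is freely reduced and $|w_{i+1}|\leq|w_i|+p$ with $|N_{i+1}|\leq|N_i|+(p-1)$. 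The case $x_{i+1}=t^{-1}$ is symmetric, while $x_{i+1}=a^{\pm 1}$ fixes $|w_i|$ and changes $|N_i|$ by at most $1$. In every case $|w_{i+1}|+|N_{i+1}|\leq|w_i|+|N_i|+(2p-1)$, so $C_1=1/(2p-1)$ suffices.

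There is no serious obstacle; the argument is considerably easier than Theorem~\ref{boundpq} precisely because the ratio $q/p$ that drove the logarithmic behaviour there collapses to $1$, turning a geometric series bound into a linear one. The only minor point to watch is the sign of $N_i$ when estimating $|dp|$ (a negative $N_i$ together with a nonnegative remainder $s$ can give $|dp|=|N_i|+s$); this only contributes to the additive constant $p-1$ and does not affect the linearity of the bound.
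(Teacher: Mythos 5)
Your proof is correct and follows essentially the same route as the paper's: track the normal forms $w_ia^{N_i}$ of prefixes of a geodesic and show each generator multiplication changes $|w_i|$ and $|N_i|$ by at most an additive constant depending on $p$, yielding $C_1$ as the reciprocal of that constant. Your extra care about the sign of $N_i$ when bounding $|dp|$ is a point the paper's version glosses over (it asserts $|N_{i+1}|\leq|N_i|$ in the non-cancelling case, which needs the $+(p-1)$ slack when $N_i<0$), but as you note this only perturbs the constant.
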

\begin{proof}
Since $|w|+|N|$ is the word length of the normal form, the upper bound is immediate.

The lower bound follows the same argument as the $p<q$ case.
Let $x_1\dots x_{n}$ be a geodesic for $x$ with $n=||x||$, and define $w_i,N_i$  as before.

Multiplication by $a^{\pm 1}$ gives $|w_{i+1}|=|w_i|$ and $|N_{i+1}|\leq |N_i|+1$.

For multiplication by $t^{\pm 1}$, let $N_i=dp+r$ for $0\leq r<p$ and $d$ an integer.\begin{itemize}\item  If $r=0$ and $w_i$ ends in $t^{\mp 1}$ we have $w_i=ua^ct^{\mp 1}$ (with $c<p$) and
$w_ia^{N_i}t^{\pm 1}=ua^ct^{\mp 1}a^{dp}t^{\pm 1}=ua^{c+dp}$
so $|w_{i+1}|\leq |w_i|$ and $|N_{i+1}|\leq |N_i|+p$.
\item
Otherwise we have $w_ia^{N_i}t^{\pm 1}=w_ia^rt^{\pm 1}a^{dp}$ so $|w_{i+1}|\leq |w_i|+p$ and $|N_{i+1}|\leq |N_i|$.
\end{itemize}
Then $|w_{n}|\leq p||x||$ and $|N_{n}|\leq p||x||$ which gives our lower bound with $C_1=\frac1{2p}$.
\end{proof}

\section{Lower bound for the growth rate for $\bs pq$}\label{sec:bound}

For $q> p>1$, the exact growth rate for $\bs pq$ is not known. Here, we will make use of the bounds specified above to find lower bounds for these rates. The growth function is given by
$$
\gamma(n)=\#B(n)=\#\{x\in\bs pq\,:\,||x||\leq n\},
$$
but observe that we can consider the alternate set using the upper bound given above:
$$
D(n)=\{x=w(a,t)a^N\in\bs pq\,:\,|w|+(q+1)\log_{q/p}N+q\leq n\}
$$
and the bound implies precisely that $D(n)\subset B(n)$. Hence, we have that $\#D(n)$ is a lower bound for $\gamma(n)$.

To estimate the number of elements with normal form $w(a,t)a^N$ which satisfy
$$
|w|+(q+1)\log_{q/p}N+q=k,
$$
observe that if $(q+1)\log_{q/p}N+q=k$, then the number $N$ is of the order of an exponential with base
$$
\left(\frac qp\right)^\frac 1{q+1}
$$
which goes to 1 as $q$ grows. The conclusion one can deduce from this is that in the set
$$
\{x=w(a,t)a^N\,:\,|w|+(q+1)\log_{q/p}N+q=k\}
$$
the dominant part will be the part of those elements satisfying $|w|=k$ because it will be an exponential with base larger than
$$
\left(\frac qp\right)^\frac 1{q+1}
$$
at least asymptotically.

Now consider the set
$$
E(n)=\{w(a,t)\,:\,|w|\leq n\}.
$$
Observe that if an element can be written as a word $w(a,t)$, then its length is bounded above by $|w|$, so we have that $E(n)\subset B(n)$. Hence the cardinality $\#E(n)$ is a genuine lower bound. And note also that this lower bound works as well for the case $BS(p,p)$.

Observe that the language used in the normal form, i.e.
$$
\{t,at,a^2t,\ldots,a^{q-1}t,t^{-1},at^{-1},a^2t^{-1},\ldots,a^{p-1}t^{-1}\}^*
$$
is regular, so its elements can be described as the words accepted by a finite state automaton. Since the elements are normal forms and Britton's lemma ensures that different normal forms will give different elements, the set $E(n)$ is precisely the set of words accepted by this finite state automaton with length at most $n$. The number of these elements can be asymptotically estimated using the eigenvalues of the adjacency matrix for the automaton, which gives the
  rough lower bounds for the growth rate of the $\bs pq$ groups as shown in Table~\ref{tableA}. In the next section we will obtain better bounds by choosing different normal forms, and we give more details on how we obtain bounds from the automata there.

\begin{center}
\begin{table}[h!]
\begin{tabular}{c|ccccccccc}
&$q=2$&3&4&5&\ldots&10&\ldots&20\\
\hline
$p=2$&2&2.14790&2.20557&2.22919&\ldots&2.24668&\ldots&2.24698\\
3&&2.26953&2.31651&2.33529&\ldots&2.34841&\ldots&2.34859\\
4&&&2.35930&2.37627&\ldots&2.38786&\ldots&2.38801\\
5&&&&2.39246&\ldots&2.40345&\ldots&2.40358\\
\ldots&&&&&&\ldots&\ldots&\ldots\\
10&&&&&&2.41396&\ldots&2.41409\\
\ldots&&&&&&&&\ldots\\
20&&&&&&&&2.41421\\
\end{tabular}
\caption{Lower bounds for the growth rate of $\bs pq$ groups obtained from the metric estimate. \label{tableA}}
\end{table}
\end{center}

\section{Improving the lower bound}\label{sec:autom}

The normal forms obtained at the beginning of section \ref{sec:nonsolv} are by no means the only ones possible. A normal form which will produce shorter normal forms, and hence better lower bounds, is given in the following lemma.

\begin{lem} Any element $x$ of $\bs pq$ for $1\leq p\leq q$ can be written  uniquely as $x=w(a,t)a^N$ where
$$
\begin{array}{rl}
w(a,t)\in\{&\hspace{-3mm}t,at,a^2t,\ldots,a^{\alpha}t,\\
&\hspace{-3mm}a^{-1}t,a^{-2}t,\ldots,a^{-\beta}t,\\
&\hspace{-3mm}t^{-1},at^{-1},a^2t^{-1},\ldots,a^{\gamma}t^{-1},\\
&\hspace{-3mm}t^{-1},a^{-1}t^{-1},a^{-2}t^{-1},\ldots,a^{-\delta}t^{-1}\}^*
\end{array}
$$ and is freely reduced. The exponents are
$$
\alpha=\left\lfloor\frac q2\right\rfloor\qquad
\beta=\left\lfloor\frac {q-1}2\right\rfloor\qquad
\gamma=\left\lfloor\frac p2\right\rfloor\qquad
\delta=\left\lfloor\frac {p-1}2\right\rfloor
$$
\end{lem}

For clarity, the exponents are given by the following table:
\begin{center}
\begin{tabular}{l|l|c|c|c|c}
&&$\alpha$&$\beta$&$\gamma$&$\delta$\\
\hline
$p=2k+1$&$q=2\ell+1$&$\ell$&$\ell$&$k$&$k$\\
$p=2k+1$&$q=2\ell$&$\ell$&$\ell-1$&$k$&$k$\\
$p=2k$&$q=2\ell+1$&$\ell$&$\ell$&$k$&$k-1$\\
$p=2k$&$q=2\ell$&$\ell$&$\ell-1$&$k$&$k-1$
\end{tabular}
\end{center}
and finally observe that since $p\leq q$, we have that $\gamma\leq\alpha$ and $\delta\leq\beta$.

The proof is straightforward by performing the following rewritings, so that the only large power of $a$ appearing is on the right hand side:
\begin{itemize}\item
remove canceling pairs $aa^{-1},a^{-1}a,tt^{-1},t^{-1}t$;
\item replace $a^{r\pm dq}t$ by $a^rta^{\pm dp}$, where $-\beta\leq r\leq\alpha$;
\item replace $a^{s\pm dp}t^{-1}$ by $a^st^{-1}a^{\pm dq}$, where $-\delta\leq s\leq\gamma$.
\end{itemize}
Uniqueness is an easy exercise based on Britton's lemma. The word $w(a,t)$ is of the form
$$
t^{m_0}a^{r_1}t^{m_1}a^{r_2}t^{m_2}\ldots a^{r_k}t^{m_k}
$$
where:
\begin{itemize}
\item $m_i,r_i\in\mathbb Z$,
\item $m_k=0$ only if the word $w$ is empty,
\item $m_i\neq 0$ for $i\geq 1$,
\item $-\beta\leq r_i\leq\alpha$,
\item if $r_i<-\delta$ or $r_i>\gamma$, then $m_i>0$.
\end{itemize}

The words will then be accepted by a finite state automaton. As an example, the case for $\bs 23$ has words in
$$
\{t, at,  a^{-1}t,  t^{-1},  at^{-1}\}^*
$$ and freely reduced, 
which are accepted by the automaton in Figure \ref{fig:BS23}
where the accept states are $S$, 1 and 2. Note that:
\begin{itemize}
\item The word starts at the state $S$ with any letter $a$, $a^{-1}$, $t$ or $t^{-1}$.
\item The word is in state 1 if the last letter was a $t$, and the next letters allowed are $a$, $a^{-1}$ or $t$.
\item The word is in state 2 if the last letter was a $t^{-1}$, and the next letters allowed are $a$, $a^{-1}$ or $t^{-1}$.
\item The word is in state 3 if the last letter was an $a$ but the last two were not $a^2$, and then the next letters allowed are $a$, $t$ or $t^{-1}$.
\item The word is in state 4 if the last letter was $a^{-1}$, and the next letter allowed is only $t$.
\end{itemize}

It is a standard procedure
(either by computing the dominant eigenvalue of the adjacency matrix, or by writing down a regular grammar for the language of the automaton and applying the Chomsky--Sch\"utzenberger theorem)
 to compute the asymptotics of lengths of words accepted by this automaton. We obtain:

\begin{prop}\label{prop:22} The growth rate for $\bs 23$ is bounded below by
$$
\frac{1+\sqrt{13}}{2}=2.30278\ldots .
$$
\end{prop}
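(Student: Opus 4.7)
My plan is to compute the spectral radius of the transition matrix of the automaton described above for $\bs 23$, and then convert this into a lower bound on the group's growth rate. The conversion is immediate: by Britton's lemma the accepted normal form words $w(a,t)$ are pairwise distinct as elements of $\bs 23$, and each such $w$ trivially satisfies $||w||\le |w|$, so every accepted word of letter-length at most $n$ contributes a distinct element to the ball $B(n)$. Thus the letter-growth rate of the regular language of normal forms is a lower bound for the growth rate of the group.

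Concretely, I label the four non-start states $T$, $T^{-1}$, $A$, $A^{-1}$ according to whether the last letter read was $t$, $t^{-1}$, $a$, or $a^{-1}$ (the accept states then being $S, T, T^{-1}$, i.e.\ the block-boundary states). Reading the allowed transitions directly off the automaton (only $t$ or $t^{-1}$ may follow $a$, since $\alpha=\gamma=1$; only $t$ may follow $a^{-1}$, since $\delta=0$; and the usual free-reduction rules forbid $tt^{-1}$ and $t^{-1}t$) the adjacency matrix is
\[
M=\begin{pmatrix} 1 & 0 & 1 & 1 \\ 0 & 1 & 1 & 1 \\ 1 & 1 & 0 & 0 \\ 1 & 0 & 0 & 0 \end{pmatrix},
\]
which is easily seen to be irreducible, since every state communicates with every other. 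The core computation is the characteristic polynomial of $M$: expanding $\det(M-\lambda I)$ along the first column (two of whose entries vanish) I expect the factorisation
\[
\chi_M(\lambda)=\lambda(\lambda-1)(\lambda^2-\lambda-3),
\]
with Perron root $(1+\sqrt{13})/2$ (the direct check $\lambda^2=\lambda+3$ is instant).

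Putting things together, Perron--Frobenius (or equivalently the Chomsky--Sch\"utzenberger transfer-matrix formula applied to a regular grammar for the language) yields that the number of accepted words of length exactly $n$ is $\Theta(\lambda_{\max}^n)$ with $\lambda_{\max}=(1+\sqrt{13})/2$, and the reduction in the first paragraph then delivers the proposition. The only step of any real substance is the determinant calculation itself; it is routine but must be done carefully to extract the quadratic factor $\lambda^2-\lambda-3$.
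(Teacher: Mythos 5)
Your proposal is correct and follows essentially the same route as the paper: the transition matrix you write down is exactly the adjacency matrix of the automaton in Figure~\ref{fig:BS23}, the characteristic polynomial does factor as $\lambda(\lambda-1)(\lambda^2-\lambda-3)$ with Perron root $(1+\sqrt{13})/2$, and the conversion to a lower bound on the growth rate is the inclusion $E(n)\subset B(n)$ already used in Section~\ref{sec:bound}. You merely make explicit the determinant computation that the paper dismisses as ``a standard procedure,'' so there is nothing to correct.
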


\begin{center}
\begin{figure}
\begin{tikzpicture}[scale=.77, ->,>=stealth',shorten >=1pt,auto,node distance=2.5cm,scale=1.3]
\tikzstyle{every state}=[fill=white,draw=black,text=black]

    \node  [state, accepting] (S) at (4,4) {$S$};
   \node  [state, accepting] (p) at (1,4) {$1$};
      \node  [state, accepting] (m) at (7,4) {$2$};
   \node  [state] (a1) at (4,7) {$3$};
  \node  [state] (a2) at (4,1) {$4$};

   \path (S) edge [below] node {$t$} (p);
   \path (S) edge [ below] node {$t^{-1}$} (m);
   \path (S) edge [left] node {$a$} (a1);
 \path (S) edge [right] node {$a^{-1}$} (a2);

      \path (p) edge [loop left] node {$t$} (p);

   \path (m) edge [loop right] node {$t^{-1}$} (m);

   \path (m) edge [left] node {$a$} (a1);
   \path (p) edge [right] node {$a$} (a1);

   \path (a1) edge [bend right, above] node {$t$} (p);
   \path (a1) edge [bend left] node {$t^{-1}$} (m);
   \path (a2) edge [bend left, left] node {$t$} (p);

   \path (p) edge [right] node {$a^{-1}$} (a2);
   \path (m) edge [right] node {$a^{-1}$} (a2);

    \end{tikzpicture}
 \caption{The automaton for the words $w$ in $\bs 23$}

   \label{fig:BS23}
\end{figure}
\end{center}

This construction readily extends to the general case. See Figure \ref{fig:BS47} for an example, the automaton for $BS(4,7)$, which gives a bound of 2.85502. By analyzing the automaton we can compute the lower bounds for the growth rates for all groups.

\begin{center}
\begin{figure}
\begin{tikzpicture}[scale=.77, ->,>=stealth',shorten >=1pt,auto,node distance=2.5cm,scale=1.3]
\tikzstyle{every state}=[fill=white,draw=black,text=black]

    \node  [state, accepting] (S) at (4,10) {$S$};
   \node  [state, accepting] (p) at (1,10) {$1$};
      \node  [state, accepting] (m) at (7,10) {$2$};
   \node  [state] (a1) at (4,13) {$3$};
  \node  [state] (a2) at (4,7) {$4$};
  \node  [state] (a3) at (4,16) {$5$};
  \node  [state] (a4) at (4,4) {$6$};
  \node  [state] (a5) at (4,19) {$7$};
  \node  [state] (a6) at (4,1) {$8$};

   \path (S) edge [below] node {$t$} (p);
   \path (S) edge [ below] node {$t^{-1}$} (m);
   \path (S) edge [left] node {$a$} (a1);
 \path (S) edge [right] node {$a^{-1}$} (a2);
 \path (a1) edge [left] node {$a$} (a3);
 \path (a3) edge [left] node {$a$} (a5);

\path (a2) edge [right] node {$a^{-1}$} (a4);
 \path (a4) edge [right] node {$a^{-1}$} (a6);

      \path (p) edge [loop left] node {$t$} (p);

   \path (m) edge [loop right] node {$t^{-1}$} (m);

   \path (m) edge [left] node {$a$} (a1);
   \path (p) edge [right] node {$a$} (a1);

   \path (a1) edge [bend right, above] node {$t$} (p);
   \path (a1) edge [bend left] node {$t^{-1}$} (m);
   \path (a2) edge [bend left, left] node {$t$} (p);
   \path (a2) edge [bend right, right] node {$t^{-1}$} (m);

   \path (p) edge [right] node {$a^{-1}$} (a2);
   \path (m) edge [right] node {$a^{-1}$} (a2);

   \path (a3) edge [bend right, left] node {$t$} (p);
   \path (a5) edge [bend right, left] node {$t$} (p);
   \path (a4) edge [bend left, right] node {$t$} (p);
   \path (a6) edge [bend left, right] node {$t$} (p);

   \path (a3) edge [bend left, right] node {$t^{-1}$} (m);

    \end{tikzpicture}
 \caption{The automaton for the words $w$ in $\bs 47$}

   \label{fig:BS47}
\end{figure}
\end{center}

\begin{thm}\label{thm:growth_rate} The growth rate for the Baumslag--Solitar group $BS(p,q)$ for the case  $4\leq p\leq q$ is bounded below by the largest zero of the polynomial
$$
P_{pq}(x)=x^{\ell+1}-x^{\ell}-2(x^{\ell-1}+x^{\ell-2}+\ldots+x^{\ell-k+1})-C_kx^{\ell-k}-2(x^{\ell-1}+x^{\ell-2}+\ldots+x)-C_\ell
$$
where
$$
C_k=\left\{
\begin{array}{ll}
1&\text{if }p=2k\\
2&\text{if }p=2k+1
\end{array}
\right.
\qquad
C_\ell=\left\{
\begin{array}{ll}
1&\text{if }q=2\ell\\
2&\text{if }q=2\ell+1
\end{array}
\right.
$$
The remaining cases for $2\leq p\leq 3$ are:
$$
\begin{array}l
P_{22}(x)=x^2-x-2\\
P_{23}(x)=x^2-x-3\\
P_{2q}(x)=x^{\ell+1}-x^{\ell}-x^{\ell-1}-2(x^{\ell-1}+x^{\ell-2}+\ldots+x)-C_\ell,\text{ for }q\geq4\\
P_{33}(x)=x^2-x-4\\
P_{3q}(x)=x^{\ell+1}-x^{\ell}-2x^{\ell-1}-2(x^{\ell-1}+x^{\ell-2}+\ldots+x)-C_\ell,\text{ for }q\geq4
\end{array}
$$
with the same $C_{\ell}$ as above.
\end{thm}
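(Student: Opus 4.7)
The plan is to construct, for each $4\le p\le q$, the natural generalization of the automata in Figures~\ref{fig:BS23} and~\ref{fig:BS47}, argue it accepts a set of freely reduced normal forms representing pairwise distinct group elements, and then extract the growth rate from the characteristic polynomial of its adjacency matrix.

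First, I would define the automaton $\mathcal A_{p,q}$ with states $S$ (start), $T$ and $T'$ (``last letter was $t$, respectively $t^{-1}$''), a positive chain $P_1,\ldots,P_\alpha$, and a negative chain $N_1,\ldots,N_\beta$, with accept states $S,T,T'$. The transitions are those dictated by the normal-form alphabet of the preceding lemma: from $P_i$ with $i<\alpha$ there are edges $a\to P_{i+1}$ and $t\to T$, together with $t^{-1}\to T'$ whenever $i\le\gamma$; from $P_\alpha$ the only outgoing edges are $t\to T$ and, in the exceptional case $\alpha=\gamma$, also $t^{-1}\to T'$; the negative chain mirrors this with $\beta,\delta$ in place of $\alpha,\gamma$; and $S,T,T'$ each emit $a\to P_1$ and $a^{-1}\to N_1$ together with the self-transitions $t\to T$ at $T$ and $t^{-1}\to T'$ at $T'$. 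By construction the label of any accepting path is a freely reduced $\{a,t\}^{\pm}$-word whose unique factorisation into normal-form letters is itself freely reduced, so by the uniqueness clause of the lemma these labels represent pairwise distinct elements of $BS(p,q)$ of word length at most the length of the path. The number of accepted words of length at most $n$ is therefore a lower bound for $\#B(n)$, and its exponential growth rate is the spectral radius of the adjacency matrix of $\mathcal A_{p,q}$.

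Next I would compute this spectral radius by the transfer-matrix method. Writing a right eigenvector $v$ with eigenvalue $\lambda$, the equations at $T$ and $T'$ read $\lambda v_T=v_T+v_{P_1}+v_{N_1}$ and $\lambda v_{T'}=v_{T'}+v_{P_1}+v_{N_1}$; subtracting forces $v_T=v_{T'}$ for any $\lambda\ne 1$, so I normalise $v_T=v_{T'}=1$. The chain relation
$$\lambda v_{P_i}=v_{P_{i+1}}+1+[i\le\gamma],\qquad v_{P_{\alpha+1}}:=0,$$
is a first-order linear recurrence that I would solve downward from $i=\alpha$ to $i=1$, obtaining $v_{P_1}=\sum_{j=1}^{\alpha}(1+[j\le\gamma])\lambda^{-j}$; an identical calculation on the negative chain gives $v_{N_1}=\sum_{j=1}^{\beta}(1+[j\le\delta])\lambda^{-j}$. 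Substituting into $(\lambda-1)=v_{P_1}+v_{N_1}$ and clearing denominators by multiplying through by $\lambda^{\ell}$ yields an integer polynomial in $\lambda$ which, after collecting terms, is precisely $P_{pq}(\lambda)$; the coefficients $2$ on monomials $x^j$ record chain depths at which \emph{both} $t$- and $t^{-1}$-edges are present in both chains, while the single coefficients $C_k$ and $C_\ell$ are the parity-dependent discrepancies coming from whether $P_\alpha$ or $N_\beta$ is missing a $t^{-1}$-edge (equivalently, whether $p$ and $q$ are odd or even).

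Finally, the exceptional families $p\in\{2,3\}$ require separate treatment because the positive chain has at most two states and several generic edges degenerate; in each of the listed subcases ($p=q=2$, $p=2,q=3$, $p=2,q\ge4$, $p=q=3$, $p=3,q\ge4$) the relevant automaton is small enough to compute the characteristic polynomial directly and verify it matches the listed $P_{2q}$ or $P_{3q}$. The main obstacle throughout is the parity bookkeeping: the presence or absence of the edges $P_\alpha\to T'$ and $N_\beta\to T'$, and the equality or strict inequality of $\alpha$ with $\gamma$ and of $\beta$ with $\delta$, each alter the boundary of the recurrence by $\pm 1$ and are precisely what produces the coefficients $C_k,C_\ell\in\{1,2\}$ in the final polynomial.
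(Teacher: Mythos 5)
Your proposal is correct and follows exactly the route the paper takes (and indeed supplies more detail than the paper, which only sketches the automaton construction and asserts the eigenvalue computation is standard): build the automaton accepting the freely reduced words over the improved normal-form alphabet, use uniqueness of normal forms to get an injection into the ball, and read the growth rate off the Perron eigenvalue via the eigenvector recurrences on the two $a$-chains, which after clearing denominators by $x^{\ell}$ gives $P_{pq}$. The only nitpick is that $S$ should also carry edges $t\to T$ and $t^{-1}\to T'$ (words may begin with $t^{\pm1}$), but this does not affect the spectral radius or the validity of the lower bound.
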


Table~\ref{tableB} contains some bounds for growth rates for $\bs pq$ using this method, which are significantly improved compared to  those obtained in the previous section.
\begin{center}\begin{table}[h!]
\begin{tabular}{c|cccccccc}
&$q=2$&3&4&5&\ldots&10&\ldots&20\\
\hline
$p=2$&2&2.3028&2.4142&2.5115&\ldots&2.6083&\ldots&2.6180\\
3&&2.5616&2.6511&2.7321&\ldots&2.8071&\ldots&2.8136\\
4&&&2.7321&2.8063&\ldots&2.8739&\ldots&2.8794\\
5&&&&2.8751&\ldots&2.9365&\ldots&2.9413\\
\ldots&&&&&&\ldots&\ldots&\ldots\\
10&&&&&&2.9917&\ldots&2.9952\\
\ldots&&&&&&&&\ldots\\
20&&&&&&&&2.999966
\end{tabular}\caption{Improved lower bounds for the growth rate of $\bs pq$ groups. \label{tableB}}
\end{table}
\end{center}

\section{Some upper bounds and some exact values}\label{sec:exactbounds}
In his Master's thesis, Tom Wong computes the size of  spheres of small radius in various Baumslag-Solitar groups (Table 5.1 in \cite{\Tom}).
Using this data and Fekete's Lemma (see page 63 of \cite{\Tom}) he obtains  upper bounds for the spherical growth rates of the following groups. (Note that the spherical growth sequence is submultiplicative in any finitely generated group, so Fekete's Lemma applies.)

\begin{itemize}
\item
For $\bs 22$ the sphere of radius 18 contains 3014654  elements, so an upper bound for the growth rate is $\sqrt[18]{3014654}$ which is approximately $2.290$.
\item
For $\bs 23$ the sphere of radius 18 contains 38595072  elements, so an upper bound for the growth rate is $\sqrt[18]{38595072}$ which is approximately $2.639$.
\item
For $\bs 35$ the sphere of radius 15 contains 11615210 elements, so an upper bound for the growth rate is $\sqrt[15]{11615210}$ which is approximately $2.958$.
\end{itemize}

Recall that if $S(z)$ is the generating function  for the spherical growth series  and $B(z)$ is the generating function for the growth series, then $$B(z)=\frac{S(z)}{1-z}.$$ If the dominant singularity (radius of convergence)  of $B(z)$ is $r$ then the exponential growth rate of the growth series is $\frac1{r}$. Since by Theorem~\ref{thm:growth_rate} the growth rate of $\bs pq$ is bounded below by $2$, the dominant singularity of $B(z)$ is at most $\frac12$,  so the factor $1-z$ in the denominator does not affect the dominant singularity, that is, the spherical growth rate is the same as the growth rate for all $2\leq p\leq q$.

Combining these bounds we have the following estimates.
\begin{itemize}
\item
For $\bs 22$ the  growth rate is between $2$ and $2.290$ (in fact it is exactly 2, see below).
\item
For $\bs 23$ the  growth rate is between $2.302$ and  $2.639$.
\item
For $\bs 35$ the  growth rate is between $2.732$ and $2.958$.
\end{itemize}

In the case $p=q$ the exact growth rates can be obtained from the generating functions obtained by Edjvet and Johnson \cite{\EdjvetJ}. 
For $\bs 22$ the generating function is $$\frac{1-z-2z^3}{(1-z)(1-2z)^2}$$ which has a dominant singularity of $\frac12$, so the growth rate is exactly 2. It follows that the lower bound obtained in Proposition~\ref{prop:22} is sharp. For $\bs 33$, the generating function is $$\frac{(1+z)^2(1-2z)(1+z+2z^3)}{(1-z)(1-z-4z^2)(1-z-2z^2-2z^3)}$$ whose dominant singularity is $0.39039$, giving a growth rate of $2.5616$, the same one we obtain in Theorem~\ref{thm:growth_rate}.
The exact values also agree with our lower bounds for $p=4,5,6$.

\bibliographystyle{plain}
\bibliography{refs}

\end{document}